\definecolor{darkgreen}{rgb}{0.,0.5,0.}
\numberwithin{equation}{section}
\newtheorem{theorem}{Theorem}[section]
\newtheorem{lemma}[theorem]{Lemma}
\newtheorem{ex}[theorem]{Example}
\newtheorem{rem}[theorem]{Remark}
\newcommand{\la}{\lambda}
\title{Core Partitions with Distinct Parts}
\author{Huan Xiong}
\address{Universit\'e de Strasbourg, CNRS, IRMA UMR 7501, F-67000 Strasbourg, France}
\email{xiong@math.unistra.fr}
\subjclass[2010]{05A17, 11P81}
\keywords{simultaneous core partition; distinct part; hook length; largest size; average size}
\begin{document}
\begin{abstract} Simultaneous core partitions have attracted much attention since Anderson's work on the number of $(t_1,t_2)$-core partitions. In this paper we focus on simultaneous core partitions with distinct parts. The generating function of $t$-core partitions with distinct parts is obtained. We also prove the results on the number, the largest size and the average size of $(t, t + 1)$-core partitions. This gives a complete answer to a conjecture of Amdeberhan, which is partly and independently proved by Straub, Nath and Sellers, and Zaleski recently.
\end{abstract}

 \maketitle

\section{Introduction}
The aim of this paper is to study simultaneous core partitions with distinct parts.  
Let's recall some basic definitions first. We refer the reader to \cite{Macdonald,ec2} for the basic knowledge on partitions.
A \emph{partition} is a finite
nonincreasing  sequence $\lambda =
(\lambda_1, \lambda_2, \ldots, \lambda_\ell)$  of positive integers. Here $\lambda_i\ (1\leq i\leq \ell)$ are called the parts of $\lambda$ and $| \lambda
|=\sum_{1\leq i\leq \ell}\lambda_i$ is the \emph{size} of $\lambda$.
A partition $\lambda$ is usually identified with its \emph{Young diagram},
which is a collection of left-justified rows  with
$\lambda_i$ boxes in the $i$-th row.  The \emph{hook length} of  the box $\square=(i,j)$ in the $i$-th row and $j$-th column of the Young diagram,  denoted by $h{(i, j)}$, is  the
number of boxes exactly to the right, or exactly below, or the box
itself. For example, Figure \textbf{$1$} shows the Young diagram and hook
lengths of the partition $(5, 3,3, 2,1)$.
For positive integers $t_1,t_2,\ldots, t_m$, a partition is 
called a \emph{$(t_1,t_2,\ldots, t_m)$-core partition} if it doesn't have  hook lengths in  $\{t_1,t_2,\ldots, t_m\}$. In particular, a partition is 
called a \emph{$t$-core partition} if it doesn't have the hook length $t$ (see \cite{stanton2,stanton}).  For example, we can see from Figure \textbf{$1$} that $\lambda=(5, 3,3, 2,1)$ is a $(8,10)$-core partition.

\begin{figure}[htbp]
\begin{center}
\Yvcentermath1

\begin{tabular}{c}
$\young(97521,642,531,31,1)$

\end{tabular}

\end{center}
\caption{The Young diagram of the partition $(5, 3,3, 2,1)$ and the hook
lengths of corresponding boxes. }
\end{figure}

Many results have been obtained in the study of 
$(t_1,t_2,\ldots, t_m)$-core partitions.  For $m=2,$ Anderson \cite{and} showed that the number of $(t_1,t_2)$-core
partitions is the \emph{rational Catalan number} $\frac{1}{t_1+t_2}  \binom{t_1+t_2}{t_1}$ when $t_1$
and $t_2$ are coprime to each other. 
Olsson and Stanton \cite{ols} found the largest size of
such partitions, which is $ \frac{(t_1^2-1)(t_2^2-1)}{24}$.
Various results on the enumeration of $(t_1,t_2)$-core partitions are achieved by \cite{AHJ, CHW,  PJ, N3, SZ,Wang}. 
A specific type of simultaneous core partitions, $(t,t+1,\ldots, t+p)$-core partitions, had been well studied. Results on the  number, the largest size and the average size of such partitions can be found in \cite{tamd1, NS, Xiong1,  YZZ}. 

Much attention has been attracted to simultaneous core partitions with distinct parts since Amdeberhan's conjectures \cite{tamd} on this subject in $2015$. The results on the enumeration of $(t,t+1)$, $(t,t+2)$ and $(t,nt\pm1)$-core partitions with distinct parts can be found in several papers \cite{NS,Straub,YQJZ,ZZ,Za} published in $2016$ and $2017$. 
In this paper\footnote{
We mention that our paper is one of the earliest papers in the study of simultaneous core partitions with distinct parts (the first edition of this paper was available on arxiv since August 2015, which is cited by the above all five papers).},
we obtain the generating function of $t$-core partitions with distinct parts in Theorem \ref{distinct*}. We also prove the results on the number, the largest size and the average size of $(t, t+1)$-core partitions with distinct parts in Theorem \ref{main},  which verify Amdeberhan's conjecture on such partitions. Notice that part of Theorem \ref{main} was generalized independently by \cite{NS2,Straub,Za}. In fact, Straub \cite{Straub} and Nath and Sellers \cite{NS2} found the number of $(t, nt-1)$ and $(t, nt+1)$-core partitions with distinct parts respectively. Zaleski \cite{Za} obtained the explicit expressions for the moments of the sizes of $(t,t+1)$-core partitions with distinct parts, which gave a generalization of Theorem \ref{main} $(4)$. Furthermore, Zaleski and Zeilberger \cite{ZZ} also obtained the the moments of the sizes of $(2t+1,2t+3)$-core partitions with distinct parts, whose number,  largest size and average size were given by Yan, Qin, Jin and Zhou \cite{YQJZ}. Our main results are stated next.

\begin{theorem} \label{distinct*}
Suppose that $t\geq 2$. Let $cd_t(n)$ be the number of $t$-core partitions of size $n$ with distinct parts. Then the generating function for such partition is  
\begin{equation}\label{eq:2}
\sum_{n\geq 0}cd_t(n)q^n=\sum_{(n_1,n_2,\cdots,n_{t-1})\in \mathcal{C}_t}q^{\sum_{i=1}^{t-1}\left(in_i+t\binom{n_i}{2}\right)-\binom{\sum_{i=1}^{t-1}n_i}{2}},
\end{equation}
where $\mathcal{C}_t=\{ (x_1,x_2,\cdots,x_{t-1})\in\mathbb{N}^{t-1}: x_ix_{i+1}=0 \ \text{for}\ 1\leq i\leq t-2  \}$.
In particular, when $t=2,3,4$, we have
$$
\sum_{n\geq 0}cd_2(n)q^n=\sum_{n\geq 0}q^{\binom{n+1}{2}},
$$
$$
\sum_{n\geq 0}cd_3(n)q^n=\sum_{n\geq 1}q^{n^2}+\sum_{n\geq 0}q^{2\binom{n+1}{2}},
$$ 
and
$$\sum_{n\geq 0}cd_4(n)q^n=\sum_{n\geq 1}q^{\frac{n(3n+1)}2}+\sum_{n\geq 0}\sum_{m\geq 0}q^{\frac{n(3n-1)+3m(m+1)-2mn}2}.$$
\end{theorem}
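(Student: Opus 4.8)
The plan is to encode each partition by its set of first-column hook lengths and to translate both constraints---being a $t$-core and having distinct parts---into transparent arithmetic conditions on that set. For a partition $\lambda=(\lambda_1,\dots,\lambda_\ell)$ write $B=B(\lambda)=\{\lambda_i+\ell-i : 1\le i\le \ell\}$, a set of $\ell$ distinct positive integers with $0\notin B$; conversely every finite set of distinct positive integers equals $B(\lambda)$ for a unique $\lambda$, and $|\lambda|=\sum_{b\in B}b-\binom{\ell}{2}$. First I would record two elementary reformulations. Since $\lambda_i-\lambda_{i+1}=b_i-b_{i+1}-1$ when the $b_i$ are listed decreasingly, $\lambda$ has distinct parts if and only if $B$ contains no two consecutive integers. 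And $\lambda$ is a $t$-core (no hook length equal to $t$) if and only if $B$ is closed under subtracting $t$ while staying positive, i.e. $b\in B,\ b\ge t\Rightarrow b-t\in B$; this is the usual abacus condition transported to $B$, and I would verify it by passing to the bead set $\{\lambda_i-i\}$, for which the hook lengths are the differences $x-y$ with $x$ a bead, $y$ a non-bead, $x>y$.

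The heart of the argument is to combine these. From the $t$-core condition and $0\notin B$ it follows that no element of $B$ is divisible by $t$: a smallest multiple of $t$ in $B$ would force $0\in B$. Moreover, on each residue class $i\in\{1,\dots,t-1\}$ the elements of $B$ form an initial segment $\{i,i+t,\dots,i+(n_i-1)t\}$, where I set $n_i=\#\{b\in B : b\equiv i \pmod t\}$; in particular $i\in B$ whenever $n_i>0$. The main point is then that the distinct-parts condition is exactly the non-adjacency condition defining $\mathcal{C}_t$: if $n_i>0$ and $n_{i+1}>0$ for some $1\le i\le t-2$ then $i,i+1\in B$ are consecutive, and conversely two consecutive integers $b,b+1$ in $B$ have residues $i,i+1$ with $1\le i\le t-2$ (no wraparound occurs, since $0\notin B$ rules out $b\equiv -1$), forcing both $n_i,n_{i+1}>0$. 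Hence $\lambda\mapsto(n_1,\dots,n_{t-1})$ is a bijection from $t$-cores with distinct parts onto $\mathcal{C}_t$, with inverse $(n_i)\mapsto B=\bigcup_{i=1}^{t-1}\{i,i+t,\dots,i+(n_i-1)t\}$.

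With the bijection in hand the generating function is immediate: $\ell=\sum_{i=1}^{t-1}n_i$ and $\sum_{b\in B}b=\sum_{i=1}^{t-1}\sum_{k=0}^{n_i-1}(i+kt)=\sum_{i=1}^{t-1}\bigl(in_i+t\binom{n_i}{2}\bigr)$, so $|\lambda|=\sum_{i=1}^{t-1}\bigl(in_i+t\binom{n_i}{2}\bigr)-\binom{\sum_{i=1}^{t-1}n_i}{2}$, which is precisely the exponent in \eqref{eq:2}. Summing $q^{|\lambda|}$ over $\mathcal{C}_t$ gives the claimed identity. Finally I would obtain the cases $t=2,3,4$ by substitution: for $t=2$ the condition is vacuous and the exponent collapses to $\binom{n_1+1}{2}$; for $t=3$ and $t=4$ the non-adjacency condition splits $\mathcal{C}_t$ into the ranges where a distinguished coordinate vanishes, and routine simplification of the quadratic exponent yields the stated single and double sums.

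The step I expect to be most delicate is the equivalence between the distinct-parts condition and membership in $\mathcal{C}_t$: specifically, pinning down that no element of $B$ is divisible by $t$ and that adjacency of residues coincides with consecutiveness of integers without spurious wraparound. Everything else is bookkeeping once the beta-set dictionary and the two reformulations are in place.
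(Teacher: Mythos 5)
Your argument is correct and follows essentially the same route as the paper: the $\beta$-set (first-column hook length) encoding, the abacus condition organizing $\beta(\lambda)$ into initial segments on each nonzero residue class mod $t$, the translation of distinct parts into "no two consecutive elements," and the resulting bijection onto $\mathcal{C}_t$ with the same size formula. The specializations $t=2,3,4$ are likewise handled by direct substitution, so there is nothing substantively different to flag.
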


\begin{theorem}[Cf. Conjecture 11.9 of  \cite{tamd}] \label{main} 
Let $t\geq 2$ be a positive integer and $(F_i)_{i\geq 0}=(0,1,1,2,3,5,8,13,\ldots)$ be the  Fibonacci numbers.  For $(t, t + 1)$-core partitions with distinct parts, we have
the following results.

$(1)$ The number of such partitions is  $F_{t+1}.$

$(2)$ The largest size of such partitions is 
$[\frac13\binom{t+1}{2}]$, where  $[x]$ is the largest integer not greater than $x$.

$(3)$ The number of such partitions with the largest size is $2$ if $t  \equiv 1 \ (\text{mod}\ 3)$ and $1$ otherwise.

$(4)$ The total sum of the sizes of these partitions and the average size are, respectively, given by
$$\sum\limits_{\substack{i+j+k=t+1\\ i,j,k\geq 1}}F_iF_jF_k\ \ \ \text{and}\ \  \ \sum\limits_{\substack{i+j+k=t+1\\ i,j,k\geq 1}}\frac{F_iF_jF_k}{F_{t+1}}.$$
\end{theorem}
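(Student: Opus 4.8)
The crux is the following explicit description of the objects being counted, which I would establish first: \emph{a partition $\mu$ with distinct parts is a $(t,t+1)$-core if and only if $\mu_1+\ell(\mu)\le t$}, where $\mu_1$ is the largest part and $\ell(\mu)$ the number of parts (the empty partition satisfying $0\le t$). One direction is immediate, since the largest hook length of $\mu$ is $h(1,1)=\mu_1+\ell(\mu)-1$, so $\mu_1+\ell(\mu)\le t$ forces every hook length to be at most $t-1$, whence $\mu$ has no hook of length $t$ or $t+1$. For the converse I would use the first-column hook lengths (beta-numbers) $\beta_i=\mu_i+\ell(\mu)-i$, which for distinct parts satisfy $\beta_i-\beta_{i+1}\ge 2$, together with the classical fact that the first-row hook lengths of $\mu$ are exactly $\{1,2,\dots,\beta_1\}\setminus\{\beta_1-\beta_i:2\le i\le\ell(\mu)\}$. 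If $\mu_1+\ell(\mu)\ge t+1$ then $\beta_1\ge t$; when $\beta_1=t$ the corner hook already equals $t$, while when $\beta_1\ge t+1$ the values $t$ and $t+1$ cannot both be omitted from the first row, since that would force two beta-numbers to differ by $1$. Either way $\mu$ has a hook of length $t$ or $t+1$. (This is also what one obtains by specialising Theorem~\ref{distinct*} and imposing the extra $(t+1)$-core condition, which restricts the tuples in $\mathcal{C}_t$ to $\{0,1\}^{t-1}$.)

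Under this description, writing the parts in increasing order $a_1<\cdots<a_k$ and setting $s_r=a_r+r-1$ gives a bijection between these partitions and the sparse subsets $S=\{s_1<\cdots<s_k\}\subseteq\{1,\dots,t-1\}$ (those with no two consecutive elements), under which $|\mu|=\sum_{i\in S}i-\binom{|S|}{2}$. Part $(1)$ then follows at once: the number $N_t$ of sparse subsets of $\{1,\dots,t-1\}$ satisfies $N_t=N_{t-1}+N_{t-2}$ (split according to whether $t-1\in S$) with $N_2=2$ and $N_3=3$, hence $N_t=F_{t+1}$.

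For parts $(2)$ and $(3)$ I would first note that, for a fixed number of parts $\ell$, the maximal size is attained by the unique ``top staircase'' $(t-\ell,t-\ell-1,\dots,t-2\ell+1)$ and equals
$$g(\ell)=\ell(t-\ell)-\binom{\ell}{2}=\frac{\ell\,(2t+1-3\ell)}{2};$$
uniqueness holds because each part must simultaneously meet its upper bound. It then remains to maximise the concave function $g$ over integers, whose continuous maximum is at $\ell=(2t+1)/6$, so a short case analysis on $t\bmod 3$ finishes both statements. When $t\equiv0$ or $2\pmod 3$ there is a unique optimal $\ell$, hence a unique maximiser, count $1$; when $t\equiv1\pmod 3$ the vertex falls exactly halfway between two consecutive integers, so $g$ attains its maximum at two values of $\ell$, giving count $2$. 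In each case one checks directly that the maximum equals $\lfloor\tfrac13\binom{t+1}{2}\rfloor$.

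Part $(4)$ is the most substantial. Writing $T_t=\sum_{\mu}|\mu|$ for the total size, I would compute the generating function in closed form and identify it with the cube of the Fibonacci generating function; concretely, the plan is to prove
$$\sum_{t\ge 2}T_t\,x^{t}=\frac{x^{2}}{(1-x-x^{2})^{3}},$$
either by setting up the two-variable generating function $\sum_{\mu\text{ distinct}}x^{\mu_1+\ell(\mu)}q^{|\mu|}$ and differentiating at $q=1$, or by deriving a linear recurrence for $T_t$ from the splitting used in Part $(1)$ (which also introduces the auxiliary sum of $|S|$ over sparse subsets). Since $\sum_{n\ge1}F_n x^n=x/(1-x-x^2)$, the right-hand side equals $\sum_t\bigl(\sum_{i+j+k=t+1}F_iF_jF_k\bigr)x^{t}$, which gives the stated total, and dividing by $F_{t+1}$ gives the average. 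I expect the bookkeeping here---tracking the $\binom{|S|}{2}$ term through the recurrence, or equivalently evaluating the $q$-derivative cleanly---to be the main obstacle, whereas the characterisation lemma and the optimisation in $(2)$--$(3)$ are comparatively routine.
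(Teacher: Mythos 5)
Your characterisation of $(t,t+1)$-core partitions with distinct parts as those with $\mu_1+\ell(\mu)\le t$ is correct and is the same statement as the paper's Lemma \ref{setdis} (that $\beta(\lambda)\subseteq\{1,\dots,t-1\}$), proved there via the abacus condition rather than via first-row hook lengths; combined with the gap-at-least-$2$ condition on beta-numbers it yields exactly the paper's ``nice subsets'' of $\{1,\dots,t-1\}$. Parts (1)--(3) then run exactly as in the paper: the Fibonacci recurrence by splitting on whether $t-1$ lies in the subset, and maximisation of the concave quadratic $\ell(2t+1-3\ell)/2$ with a case analysis on $t\bmod 3$ (your count of optimal $\ell$ matches the paper's). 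The one substantive issue is part (4): your target identity $\sum_{t\ge2}T_t\,x^t=x^2/(1-x-x^2)^3$ is correct and equivalent to the paper's claim that the total size equals $\psi_{t+1}=\sum_{i+j+k=t+1}F_iF_jF_k$, but you only state a plan and explicitly defer the bookkeeping, which is where essentially all of the work in part (4) lies. The paper carries it out by introducing $b_t=\sum_B|B|$, $c_t=\sum_B|B|^2$ and $d_t=\sum_B\sum_{x\in B}x$ over nice subsets $B$, deriving Fibonacci-type recurrences for each from the same splitting as in (1), combining them into $e_t-e_{t-1}-e_{t-2}=(t-1)F_{t-1}-b_{t-2}$ for the total size $e_t=d_t-\tfrac12(c_t-b_t)$, and then checking that $(t-1)F_{t-1}-b_{t-2}$ equals $\phi_t=\sum_{i+j=t}F_iF_j$ by showing both sides satisfy the same recurrence. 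Your second suggested route (a linear recurrence for $T_t$ from the splitting in (1)) is precisely this and does work, but note that the $\binom{|S|}{2}$ correction forces you to track \emph{two} auxiliary sequences, $\sum_S|S|$ and $\sum_S|S|^2$, not just the one you mention; until that computation is actually done, part (4) is unproved.
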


\section{The $\beta$-sets of core partitions}

In this section, we study the properties of $\beta$-sets of $t$-core partitions and obtain the generating function for $t$-core partitions of size $n$ with distinct parts.

Suppose that $\lambda = (\lambda_1, \lambda_2, \ldots, \lambda_\ell)$ is a partition whose corresponding Young diagram has $\ell$ rows.
 The \emph{$\beta$-set} $\beta(\lambda)$ of $\lambda$ is defined to
 be the set of
 \emph{first-column hook lengths} in the Young diagram of $\lambda$ (for example, see \cite{ols,Xiong1}), i.e., $$\beta(\lambda)=\{h(i,1) : 1 \leq i \leq \ell\}.$$  The following results are well-known and easy to
prove.

\begin{lemma}[\cite{and, berge,ols,Xiong1}]  \label{betaset}

(1) Suppose $\lambda = (\lambda_1, \lambda_2, \ldots, \lambda_\ell)$ is a partition.
Then $\lambda_i=h(i,1)-\ell+i$ for $ 1 \leq i \leq \ell$. Thus the size of
$\lambda$ is  $| \lambda |=\sum_{x\in
\beta(\lambda)}{x}-\binom{ |\beta(\lambda)| }{2}$.

(2) (\textbf{Abacus condition for t-core partitions.})
A partition $\lambda$ is a $t$-core partition if and only if for any
$x\in \beta(\lambda)$ with $x\geq t$,
we always have $x-t \in \beta(\lambda)$.
\end{lemma}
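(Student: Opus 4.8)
The plan is to treat the two parts separately, in both cases by translating between the Young diagram and the $\beta$-set through a direct count of hook lengths.

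For part (1), I would first establish the formula $h(i,1)=\lambda_i+\ell-i$ by counting the boxes in the hook of $(i,1)$: there are $\lambda_i-1$ boxes strictly to the right of $(i,1)$ in row $i$, there are $\ell-i$ boxes strictly below it in the first column (each of the rows $i+1,\dots,\ell$ contributes one, since every $\lambda_k>0$), and adding the box itself gives $h(i,1)=(\lambda_i-1)+(\ell-i)+1=\lambda_i+\ell-i$. Rearranging yields $\lambda_i=h(i,1)-\ell+i$. Since $\lambda$ is nonincreasing, the values $h(i,1)=\lambda_i+\ell-i$ are strictly decreasing in $i$, hence distinct, so $|\beta(\lambda)|=\ell$. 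Summing over $i$ then gives
$$\sum_{x\in\beta(\lambda)}x=\sum_{i=1}^\ell(\lambda_i+\ell-i)=|\lambda|+\binom{\ell}{2},$$
and substituting $\ell=|\beta(\lambda)|$ produces the claimed size formula.

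For part (2), the key is to describe the full collection of hook lengths of $\lambda$ in terms of $\beta(\lambda)$. Writing $B=\beta(\lambda)=\{b_1>\cdots>b_\ell\}$ with $b_i=h(i,1)$, I would prove that the hook lengths occurring in row $i$ are exactly $\{b_i-c : c\in\mathbb{Z}_{\geq0}\setminus B,\ 0\leq c<b_i\}$; equivalently, the boxes of $\lambda$ are in bijection with the pairs $(b,c)$ satisfying $b\in B$, $c\notin B$, and $c<b$, the box associated to $(b,c)$ having hook length $b-c$. The cleanest way to see this is through the boundary lattice path (Maya diagram) of $\lambda$, in which the rows record vertical steps at the positions in $B$ and the columns record horizontal steps at the positions in $\mathbb{Z}_{\geq0}\setminus B$: each box corresponds to a vertical step at some $b\in B$ together with a later horizontal step at some $c<b$, and its hook length equals $b-c$. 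A check that this description is complete is that the number of gaps $c$ below $b_i$ equals $b_i-(\ell-i)=\lambda_i$, matching the length of row $i$.

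Granting this characterization, the conclusion is immediate: $t$ is a hook length of $\lambda$ if and only if some pair $(b,c)$ satisfies $b-c=t$, i.e. there exists $b\in B$ with $b\geq t$ and $b-t=c\notin B$ (using that $0\notin B$ since first-column hook lengths are positive). Hence $\lambda$ fails to be a $t$-core precisely when some $x\in\beta(\lambda)$ has $x\geq t$ but $x-t\notin\beta(\lambda)$, which is exactly the negation of the stated abacus condition. The main obstacle is establishing the hook-length characterization of part (2) rigorously; once the Maya-diagram bijection between boxes and pairs $(b,c)$ is in place, both the size formula and the $t$-core criterion follow by bookkeeping.
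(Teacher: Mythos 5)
Your proposal is correct. Note, though, that the paper itself offers no proof of this lemma: it is stated as well-known, with citations to Anderson, Berge, Olsson--Stanton, and Xiong, so there is no ``paper proof'' to match step for step. Your argument is the standard one found in those references. Part (1) as you give it is complete: the box count $h(i,1)=(\lambda_i-1)+(\ell-i)+1$ is exactly right, and the observation that the $h(i,1)$ are strictly decreasing (so $|\beta(\lambda)|=\ell$) is the point needed to make the summation identity $\sum_{x\in\beta(\lambda)}x=|\lambda|+\binom{\ell}{2}$ legitimate. For part (2), your key claim --- that the hook lengths in row $i$ are exactly $\{b_i-c : c\in\mathbb{Z}_{\geq 0}\setminus B,\ 0\leq c<b_i\}$ --- is the classical abacus/Maya-diagram description of hooks, and your consistency check (the number of gaps below $b_i$ is $b_i-(\ell-i)=\lambda_i$, the row length) is the right sanity check; to make it fully rigorous one would add the one-line identification $h(i,j)=h(i,1)-h(i,1)$-th-boundary-position of column $j$, i.e. that the horizontal steps of the boundary path below position $b_i$ enumerate the boxes of row $i$ and that hook lengths are differences of step positions. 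Once that is in place, your deduction of the $t$-core criterion (using $0\notin B$ so that $c=x-t$ ranges over all non-elements of $B$, including $0$) is exactly the standard argument, and it simultaneously re-proves part (1)'s size formula if one wishes. In short: the proposal is sound and is the proof the paper implicitly delegates to its references.
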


\begin{rem}\label{remark1}
An element $x\in\beta(\la)$ is called \emph{$t$-maximal} if $x+t\notin\beta(\la)$. Lemma \ref{betaset}(2) means that the $\beta$-set $\beta(\lambda)$ of a $t$-core partition $\lambda$ is determined by all $t$-maximal elements in $\beta(\la)$. Thus there is a bijection $\eta$ which sends each $t$-core partition $\la$ to $(n_1,n_2,\cdots,n_{t-1}):=\left(n_1(\la),n_2(\la),\cdots,n_{t-1}(\la)\right)\in \mathbb{N}^{t-1}$ such that $t(n_i-1)+i$ is maximal in $\beta(\la)$ if $n_i\geq 1$; and $i\notin \beta(\la)$ if $n_i$=0 for $1\leq i \leq t-1$. In this case, $
\beta(\la)=\bigcup_{i=1}^{t-1}\bigcup_{j=0}^{n_i-1}\{jt+i\}
$ and therefore $|\beta(\la)|=\sum_{i=1}^{t-1}n_i$.
\end{rem}

\begin{ex}
Let $\lambda=(5, 3,3, 2,1)$ be a $8$-core partition. Then $\beta(\lambda)=\{9,6,5,3,1\}$ and $\eta(\la)=(2,0,1,0,1,1,0)$.
\end{ex}

By Lemma \ref{betaset}(1), we have the following result.
\begin{lemma} \label{distinct}
The partition  $\lambda$ is a partition with distinct parts if and only if there doesn't exist $x,y\in \beta(\lambda)$ with $x-y=1.$
\end{lemma}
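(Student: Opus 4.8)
The plan is to translate the distinctness of the parts directly into a condition on consecutive differences of the $\beta$-set, using Lemma \ref{betaset}(1). First I would list the elements of $\beta(\lambda)$ in strictly decreasing order. Writing $b_i=h(i,1)$, Lemma \ref{betaset}(1) gives $\lambda_i=b_i-\ell+i$, so $b_i-b_{i+1}=\lambda_i-\lambda_{i+1}+1\geq 1$; in particular the first-column hook lengths are pairwise distinct and strictly decreasing, so that $\beta(\lambda)=\{b_1>b_2>\cdots>b_\ell\}$.

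Next I would compute the consecutive differences explicitly: from $\lambda_i=b_i-\ell+i$ we get $\lambda_i-\lambda_{i+1}=b_i-b_{i+1}-1$. Since $\lambda$ is weakly decreasing, $\lambda_i-\lambda_{i+1}\geq 0$, and $\lambda$ has distinct parts precisely when $\lambda_i-\lambda_{i+1}\geq 1$ for every $i$, i.e.\ when $b_i-b_{i+1}\geq 2$ for every $i$. Equivalently, $\lambda$ fails to have distinct parts exactly when $b_i-b_{i+1}=1$ for some $i$.

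It then remains to match the statement ``$b_i-b_{i+1}=1$ for some $i$'' with ``there exist $x,y\in\beta(\lambda)$ with $x-y=1$''. One direction is immediate: an adjacent pair with $b_i-b_{i+1}=1$ supplies $x=b_i$, $y=b_{i+1}$. For the converse, suppose $x,y\in\beta(\lambda)$ with $x-y=1$; writing $x=b_j$, $y=b_k$ with $j<k$, strict monotonicity over the integers forces $b_j-b_k\geq k-j$, whence $k-j\leq 1$ and so $k=j+1$, meaning $x,y$ are adjacent in the ordering with difference $1$. Combining the two steps and taking contrapositives yields the claimed equivalence.

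I anticipate no genuine obstacle here; the only point requiring a little care is the converse in the last step, where one must exclude the possibility that a unit difference is realized by two non-adjacent $\beta$-set elements. This is precisely what the observation $b_j-b_k\geq k-j$ (strictly decreasing integers spread at least as far apart as their index gap) rules out, reducing every unit difference to an adjacent one.
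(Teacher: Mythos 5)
Your proof is correct and follows essentially the same route as the paper: both apply Lemma \ref{betaset}(1) to show that $\lambda_i=\lambda_{i+1}$ is equivalent to $h(i,1)-h(i+1,1)=1$. You additionally spell out why a unit difference between non-adjacent elements of $\beta(\lambda)$ is impossible, a detail the paper leaves implicit in its final ``This implies the claim.''
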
 
\begin{proof}
Suppose that $\lambda = (\lambda_1, \lambda_2, \ldots, \lambda_\ell).$ Then by Lemma \ref{betaset}(1), we have $\lambda_i=\lambda_{i+1}$ if and only if $h(i,1)-\ell+i=h(i+1,1)-\ell+i+1,$ which is equivalent to $h(i,1)-h(i+1,1)=1.$ This implies the claim.
\end{proof}

Let $\mathcal{A}_t=\{ x\in\mathbb{N}: 1 \leq x\leq t-1 \}$ for every $t\geq 2$. We say that a subset $B$ of $\mathcal{A}_t$ is \emph{nice} if $x-y\neq 1$ for any $x,y\in B$.
Let $\mathcal{B}_t$ be the set of nice subsets of $\mathcal{A}_t$ and $a_t=|\mathcal{B}_t|$ be the number of nice subsets of $\mathcal{A}_t$. Recall that $\mathcal{C}_t=\{ (x_1,x_2,\cdots,x_{t-1})\in\mathbb{N}^{t-1}: x_ix_{i+1}=0 \ \text{for}\ 1\leq i\leq t-2  \}$ for every $t\geq 2$.

\begin{ex}
Let $t=5$. We have $\mathcal{A}_5=\{1,2,3,4 \}$. The set of all nice subsets of $\mathcal{A}_5$ are $\mathcal{B}_5=\{  \{1\},\{2\},\{3\},\{4\},\{1,3\},\{1,4\},\{2,4\} \}$. 
Notice that $\mathcal{C}_5$ is determined by $\mathcal{B}_5$ in the following manner: $\mathcal{C}_5=\{ (x_1,0,0,0)\in\mathbb{N}^{4}\} \cup \{ (0,x_2,0,0)\in\mathbb{N}^{4}\} \cup \{ (0,0,x_3,0)\in\mathbb{N}^{4}\} \cup \{ (0,0,0,x_4)\in\mathbb{N}^{4}\} \cup \{ (x_1,0,x_3,0)\in\mathbb{N}^{4}\} \cup \{ (x_1,0,0,x_4)\in\mathbb{N}^{4}\} \cup \{ (0,x_2,0,x_4)\in\mathbb{N}^{4}\}$. 
\end{ex}

Let ${CD}_t$ be the set of $t$-core partitions with distinct parts and $cd_t(n)$ be the number of partitions in ${CD}_t$ with size $n$. Recall that $\eta$ is defined in Remark \ref{remark1}. 

\begin{theorem}\label{th:size}
The function  $\eta$ gives a bijection between the sets ${CD}_t$ and $\mathcal{C}_t$. If $\eta(\la)=(n_1,n_2,\cdots,n_{t-1})$ for some $t$-core partition $\la$ with distinct parts, then 
\begin{equation}\label{eq:1}
|\la|=\sum_{i=1}^{t-1}\left(in_i+t\binom{n_i}{2}\right)-\binom{\sum_{i=1}^{t-1}n_i}{2}.
\end{equation}
\end{theorem}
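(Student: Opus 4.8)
The plan is to leverage the structural description of $\beta(\la)$ from Remark \ref{remark1} together with the two criteria in Lemmas \ref{betaset} and \ref{distinct}. Since Remark \ref{remark1} already establishes that $\eta$ is a bijection from the set of all $t$-core partitions onto $\mathbb{N}^{t-1}$, the first task reduces to proving the equivalence $\la\in CD_t\iff\eta(\la)\in\mathcal{C}_t$; restricting the ambient bijection then immediately yields the claimed bijection between $CD_t$ and $\mathcal{C}_t$, since the restriction stays injective and every preimage of a point of $\mathcal{C}_t$ automatically lands in $CD_t$.

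First I would fix $\la$ with $\eta(\la)=(n_1,\dots,n_{t-1})$ and write out $\beta(\la)=\bigcup_{i=1}^{t-1}\bigcup_{j=0}^{n_i-1}\{jt+i\}$ explicitly. By Lemma \ref{distinct}, $\la$ has distinct parts precisely when $\beta(\la)$ contains no two elements differing by $1$. The key observation is a residue argument modulo $t$: any element $jt+i$ lies in residue class $i$, so two elements whose difference is $1$ must lie in residue classes $i,i'$ with $i-i'\equiv 1\pmod t$. Since $1\leq i,i'\leq t-1$, this forces $i=i'+1$ (the wrap-around alternative $i-i'=1-t$ is ruled out by the range of $i,i'$), and then the equation $(j-j')t=0$ forces $j=j'$. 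Hence a difference of $1$ occurs if and only if $jt+i'$ and $jt+(i'+1)$ both belong to $\beta(\la)$ for some $j$, which, taking $j=0$, happens exactly when $n_{i'}\geq 1$ and $n_{i'+1}\geq 1$, i.e. $n_{i'}n_{i'+1}\neq 0$. Therefore $\la$ has distinct parts iff $n_in_{i+1}=0$ for all $1\leq i\leq t-2$, which is the defining condition of $\mathcal{C}_t$.

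For the size formula I would simply substitute the explicit $\beta$-set into the identity $|\la|=\sum_{x\in\beta(\la)}x-\binom{|\beta(\la)|}{2}$ from Lemma \ref{betaset}(1). Splitting the sum over residue classes gives $\sum_{x\in\beta(\la)}x=\sum_{i=1}^{t-1}\sum_{j=0}^{n_i-1}(jt+i)=\sum_{i=1}^{t-1}\bigl(in_i+t\binom{n_i}{2}\bigr)$, using $\sum_{j=0}^{n_i-1}j=\binom{n_i}{2}$, while $|\beta(\la)|=\sum_{i=1}^{t-1}n_i$ supplies the subtracted binomial term; combining the two yields exactly (\ref{eq:1}).

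The only genuinely delicate point is the residue analysis in the second paragraph: one must be careful to rule out the wrap-around case $i-i'=1-t$ using the bounds on $i,i'$, and to confirm that elements within a single residue class, which differ by multiples of $t\geq 2$, can never differ by $1$. Once these two checks are in place the equivalence is forced, and the remainder of the argument is routine bookkeeping with the explicit $\beta$-set.
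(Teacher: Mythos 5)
Your proposal is correct and follows essentially the same route as the paper: restrict the bijection of Remark \ref{remark1} using Lemma \ref{distinct} to characterize membership in $\mathcal{C}_t$, then apply Lemma \ref{betaset}(1) to the explicit $\beta$-set for the size formula. The only difference is that you spell out the residue-class argument showing that a difference of $1$ in $\beta(\la)$ can only come from $i$ and $i+1$ themselves (handling both directions of the equivalence explicitly), a point the paper leaves implicit when it says the bijection is ``guaranteed by Remark \ref{remark1}.''
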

\begin{proof}
By Lemma \ref{distinct} we know if $\la$ is a $t$-core partition with distinct parts, then $i$ and $i+1$ couldn't be  in $\beta(\la)$ at the same time for $1\leq i \leq t-2$. Also we know $i\in \beta(\la)$ iff $n_i(\la)\geq 1$. Then the bijection between ${CD}_t$ and $\mathcal{C}_t$ is guaranteed by Remark \ref{remark1}. By the definition of $\eta$, we know $\eta(\la)=(n_1,n_2,\cdots,n_{t-1})$ means
$$
\beta(\la)=\bigcup_{i=1}^{t-1}\bigcup_{j=0}^{n_i-1}\{jt+i\}.
$$
Therefore by Lemma \ref{betaset}(1) we derive \eqref{eq:1}.
\end{proof}

Now we are ready to prove Theorem \ref{distinct*}.

\begin{proof}[Proof of Theorem \ref{distinct*}]
The formula \eqref{eq:2} is a direct corollary of Theorem \ref{th:size}.

In particular, when $t=2$, we have $\mathcal{C}_2=\mathbb{N}$. Then by \eqref{eq:2} we obtain
$$
\sum_{n\geq 0}cd_2(n)q^n=\sum_{n_1\geq 0}q^{n_1+2\binom{n_1}{2}-\binom{n_1}{2}}=\sum_{n\geq 0}q^{\binom{n+1}{2}}.
$$

When $t=3$, We have 
$$ 
\mathcal{C}_3=\{ (x_1,0)\in\mathbb{N}^{2}:x_1\geq 1  \} \cup
\{ (0,x_2)\in\mathbb{N}^{2}: x_2\geq 0  \} .
$$
By \eqref{eq:2} we obtain
\begin{align*}
\sum_{n\geq 0}cd_3(n)q^n
&=\sum_{n_1\geq 1}q^{n_1+3\binom{n_1}{2}-\binom{n_1}{2}}+\sum_{n_2\geq 0}q^{2n_2+3\binom{n_2}{2}-\binom{n_2}{2}}
\\&=\sum_{n\geq 1}q^{n^2}+\sum_{n\geq 0}q^{2\binom{n+1}{2}}.
\end{align*}

When $t=4$, We have 
$$ 
\mathcal{C}_4=\{ (0,x_2,0)\in\mathbb{N}^{2}:x_2\geq 1  \} \cup
\{ (x_1,0,x_3)\in\mathbb{N}^{2}: x_1\geq 0, x_3\geq 0  \} .
$$
By \eqref{eq:2} we obtain
\begin{align*}
\sum_{n\geq 0}cd_4(n)q^n
&=\sum_{n_2\geq 1}q^{2n_2+4\binom{n_2}{2}-\binom{n_2}{2}}+\sum_{n_1\geq 0}\sum_{n_3\geq 0}q^{n_1+3n_3+4\binom{n_1}{2}+4\binom{n_3}{2}-\binom{n_1+n_3}{2}}
\\&=\sum_{n\geq 1}q^{\frac{n(3n+1)}2}+\sum_{n\geq 0}\sum_{m\geq 0}q^{\frac{n(3n-1)+3m(m+1)-2mn}2}.
\end{align*}

\end{proof}

\bigskip

\section{$(t, t + 1)$-core partitions with distinct parts}

In this section we focus on $(t, t + 1)$-core partitions with distinct parts.
We have the following characterization for $\beta$-sets of
$(t,t+1)$-core partitions.

\begin{lemma} \label{set+1}
Let $t\geq 2$  be a positive integer. Suppose that $\lambda$ is a
$(t,t+1)$-core partition.  Then we have  $$\beta(\lambda)\subseteq \  \bigcup_{1\leq k \leq t-1} \{ x\in\mathbb{N}: (k-1)(t+1)+1\leq x \leq kt-1\}.$$
\end{lemma}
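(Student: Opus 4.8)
The plan is to exploit the abacus condition (Lemma \ref{betaset}(2)) for the two moduli $t$ and $t+1$ \emph{simultaneously}, together with the elementary fact that $0\notin\beta(\lambda)$ (every first-column hook length is positive, so $\beta(\lambda)\subseteq\mathbb{Z}_{\geq 1}$). The two instances of the abacus condition assert that whenever $x\in\beta(\lambda)$ we have $x-t\in\beta(\lambda)$ as soon as $x\geq t$, and $x-(t+1)\in\beta(\lambda)$ as soon as $x\geq t+1$. Thus from any element of $\beta(\lambda)$ one may repeatedly \emph{descend} by steps of size $t$ and of size $t+1$, and every value reached along such a descent is again forced to lie in $\beta(\lambda)$.

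The key claim I would isolate is: if a positive integer $x$ can be written as $x=at+b(t+1)$ with $a,b\in\mathbb{N}$, then $x\notin\beta(\lambda)$. To prove it I assume $x\in\beta(\lambda)$ and descend to $0$ by first applying the $(t+1)$-rule exactly $b$ times and then the $t$-rule exactly $a$ times. The only point to verify is that this particular order keeps every intermediate value large enough for the corresponding step to be legal: before the $i$-th of the $b$ initial steps the current value equals $at+(b-i+1)(t+1)\geq t+1$, after those steps it equals $at$, and before the $j$-th of the $a$ final steps it equals $(a-j+1)t\geq t$. Hence $0\in\beta(\lambda)$, contradicting $0\notin\beta(\lambda)$. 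In other words, $\beta(\lambda)$ is disjoint from the set of nonnegative integer combinations $at+b(t+1)$, i.e.\ the numerical semigroup generated by $t$ and $t+1$.

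It then remains to show that every positive integer lying outside the claimed union belongs to that semigroup. For this I would write an arbitrary $x\geq 1$ as $x=qt+r$ with $q=\lfloor x/t\rfloor\geq 0$ and $0\leq r\leq t-1$, and split into two cases. If $q\geq r$, then $x=(q-r)t+r(t+1)$ exhibits $x$ as a semigroup element, so $x\notin\beta(\lambda)$ by the claim. If instead $q<r$, then putting $k=q+1$ and $s=r-q$ one checks $1\leq k\leq t-1$, $1\leq s\leq t-k$, and $x=(k-1)(t+1)+s$, so $x$ already lies in the $k$-th interval on the right-hand side. Consequently any $x\in\beta(\lambda)$ must fall in the second case, and therefore in the stated union, which is exactly the assertion of the lemma.

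I expect the main obstacle to be the bookkeeping in the descent step: committing to an order of subtractions and verifying that no intermediate value drops below the threshold required to apply the abacus condition, since an ill-chosen out-of-order subtraction could in principle strand the descent at a value in the open interval $(0,t)$. Once the ``semigroup elements are excluded'' claim is secured, the residue case-analysis $q\geq r$ versus $q<r$ is routine, and it simultaneously makes transparent why the right-hand side of the lemma is precisely the set of gaps of the numerical semigroup generated by $t$ and $t+1$.
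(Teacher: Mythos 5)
Your proof is correct and takes essentially the same approach as the paper: the paper's proof consists exactly of the two assertions you establish, namely that the abacus condition forces $\beta(\lambda)$ to avoid the numerical semigroup $\{at+b(t+1): a,b\geq 0\}$, and that the complement of this semigroup in $\mathbb{N}$ is precisely the stated union of intervals. The paper states both facts without justification, while you supply the details (the ordered descent argument and the division-algorithm case analysis); this is elaboration, not a different method.
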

\begin{proof} 
By Lemma \ref{betaset}(2)  we have $at+b(t+1)\notin \beta(\lambda)$  for every $a,b \geq 0$.   Then
$$\beta(\lambda)\subseteq \mathbb{N}\setminus \{ at+b(t+1): a,b \geq 0  \} =  \bigcup_{1\leq k \leq t-1} \{ x\in\mathbb{N}: (k-1)(t+1)+1\leq x \leq kt-1\}.$$ \end{proof}

\begin{lemma} \label{setdis}
Let $t\geq 2$  be a positive integer. Suppose that $\lambda$ is a
$(t,t+1)$-core partition with distinct parts.  Then  $$\beta(\lambda)\subseteq \mathcal{A}_t= \{ x\in\mathbb{N}: 1 \leq x\leq t-1  \}.$$
\end{lemma}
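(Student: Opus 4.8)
The goal is to strengthen Lemma~\ref{set+1} under the additional hypothesis that $\lambda$ has distinct parts, showing that $\beta(\lambda)$ cannot spill beyond $\mathcal{A}_t=\{1,\ldots,t-1\}$ into the higher intervals $\{(k-1)(t+1)+1,\ldots,kt-1\}$ for $k\geq 2$. The plan is to argue by contradiction: assume some element of $\beta(\lambda)$ lies in one of these higher intervals, and use the $t$-core abacus condition (Lemma~\ref{betaset}(2)) together with the distinct-parts criterion (Lemma~\ref{distinct}) to manufacture two elements of $\beta(\lambda)$ differing by exactly $1$, contradicting distinctness.

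First I would set up the contradiction. Suppose $\beta(\lambda)\not\subseteq\mathcal{A}_t$. Then by Lemma~\ref{set+1} there is a smallest index $k\geq 2$ and some $x\in\beta(\lambda)$ with $(k-1)(t+1)+1\leq x\leq kt-1$. The natural move is to descend by $t$: since $\lambda$ is a $t$-core and $x\geq t$, Lemma~\ref{betaset}(2) gives $x-t\in\beta(\lambda)$. Here $x-t$ satisfies $(k-1)(t+1)+1-t\leq x-t\leq kt-1-t$, i.e.\ $(k-2)(t+1)\leq x-t\leq (k-1)t-1$. The upper end lands inside the next-lower admissible interval (indexed by $k-1$), while the lower end $(k-2)(t+1)$ is exactly one less than the left endpoint $(k-2)(t+1)+1$ of that interval — that is, $x-t$ is either inside the $(k-1)$-interval or sits at the forbidden value just below it.

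The key step is to exploit this boundary. If $x$ is taken to be the \emph{smallest} first-column hook length lying outside $\mathcal{A}_t$, then applying the abacus descent produces an element $x-t$ that is forced, by the gap structure of Lemma~\ref{set+1} (the forbidden values are exactly the multiples $at+b(t+1)$), to collide with the boundary: one shows that $x-t$ and the top of the lower block, or $x-t$ and a neighbouring admissible value, differ by $1$. More precisely, the forbidden reals separating the $k$-interval from the $(k-1)$-interval are the consecutive integers $kt-1+1=kt$ (a multiple of $t$) up through $(k-1)(t+1)$, and subtracting $t$ slides $x$ across this gap in a way that lands adjacent to an element already known to be in $\beta(\lambda)$ by minimality of $k$. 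That adjacency, i.e.\ two $\beta$-set elements differing by $1$, contradicts Lemma~\ref{distinct}, completing the proof.

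The main obstacle I anticipate is the bookkeeping at the interval boundaries: one must verify carefully that the descent $x\mapsto x-t$ (and possibly a second descent, or a comparison with $x-t-1$ or $x-t+1$) genuinely lands on or immediately beside an occupied position, and that the minimality choice of $x$ or $k$ forces the neighbouring position to be occupied rather than merely admissible. The cleanest route is likely to pick $x$ minimal in $\beta(\lambda)\setminus\mathcal{A}_t$ and track both $x-t$ and its neighbours explicitly, using that $x-1\notin\beta(\lambda)$ (distinctness) and $x-t\in\beta(\lambda)$ ($t$-core) to pin down a forced unit gap; handling the edge cases $k=2$ versus $k\geq 3$ uniformly is where the care is needed.
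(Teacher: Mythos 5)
There is a genuine gap: the step that actually produces two elements of $\beta(\lambda)$ differing by $1$ is never carried out, and the strategy as described cannot produce it. You only ever descend by $t$, obtaining $x-t\in\beta(\lambda)$, and then hope that minimality of $x$ (or of $k$) forces $x-t$ to sit adjacent to an occupied position. But first, your boundary arithmetic is off: $(k-1)(t+1)+1-t=(k-2)(t+1)+2$, not $(k-2)(t+1)$, so $x-t$ lands strictly inside the $(k-1)$-st admissible interval with no forced collision at its left endpoint. Second, and more seriously, ``admissible'' does not mean ``occupied,'' so minimality gives you nothing beyond $x-t\in\beta(\lambda)\cap\mathcal{A}_t$ (after reducing to $k=2$), which is not a contradiction. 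Concretely, for $t\geq 3$ the partition $(t+1,2)$ has $\beta$-set $\{2,t+2\}$: it is a $t$-core with distinct parts, its $\beta$-set lies inside the union of intervals of Lemma~\ref{set+1}, it is closed under subtraction of $t$, and yet it contains an element outside $\mathcal{A}_t$. So no argument built only from Lemma~\ref{set+1}, the $t$-core descent, and Lemma~\ref{distinct} can succeed; the $(t+1)$-core abacus condition must be used directly, not merely through the interval structure.

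The missing idea is exactly the one the paper uses, and it makes the proof a two-liner needing neither contradiction nor Lemma~\ref{set+1}: since $\lambda$ is both a $t$-core and a $(t+1)$-core, Lemma~\ref{betaset}(2) applies with both moduli, so any $x\in\beta(\lambda)$ with $x\geq t+2$ forces \emph{both} $x-t$ and $x-(t+1)$ into $\beta(\lambda)$; these differ by exactly $1$, contradicting Lemma~\ref{distinct}. The cases $x=t$ and $x=t+1$ are excluded because they would force $0\in\beta(\lambda)$. You brush against this when you mention ``a comparison with $x-t-1$'' among anticipated obstacles, but you never invoke the $(t+1)$-core condition to put $x-t-1$ into $\beta(\lambda)$, which is the whole proof.
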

\begin{proof} 
By Lemma \ref{betaset}(2)  we have $t,t+1\notin \beta(\lambda)$ since $0\notin \beta(\la)$.  For $x\geq t+2$, if $x\in\beta(\lambda),$ by Lemma \ref{betaset}(2) we know $x-t,x-(t+1)\in \beta(\lambda)$. But by Lemma \ref{distinct} we know this is impossible since $\lambda$ is a partition with distinct parts. Then we know  $x\notin \beta(\lambda)$ and thus $\beta(\lambda)$ is a subset of  $\mathcal{A}_t$. 
\end{proof}

Now we are ready to prove our main result Theorem \ref{main}.

\begin{proof}[Proof of Theorem \ref{main}]  
(1) By Lemmas \ref{betaset}(2), \ref{distinct} and \ref{setdis} we know  a partition $\lambda$ is a $(t, t + 1)$-core partition with distinct parts if and only if $\beta(\lambda)$ is a nice subset of $\mathcal{A}_t$. 
 Thus the number of $(t, t + 1)$-core partitions with distinct parts equals the 
number $a_t$ of nice subsets of $\mathcal{A}_t$. Notice that $a_2=2=F_3$, $a_3=3=F_4$. When $t\geq 4$, suppose that $B\in \mathcal{B}_t$.  If  $t-1\in B$, then  $t-2\notin B$. This means that 
$$|\{B\in \mathcal{B}_t:t-1\in B\}|=|\mathcal{B}_{t-2}|=a_{t-2}.$$ Also we have $$|\{B\in \mathcal{B}_t:t-1\notin B\}|=| \mathcal{B}_{t-1}|=a_{t-1}.$$ Thus $a_t=a_{t-1}+a_{t-2}.$  Then we have $a_t=F_{t+1}$ for $t\geq 2$, which means that the number of $(t, t + 1)$-core partitions with distinct parts equals $F_{t+1}$. 

(2) Suppose that $\lambda$ is a $(t, t + 1)$-core partition with distinct parts such that $\beta(\lambda)=\{x_1,x_2,\ldots,x_k\}.$ 
In (1) we already know $\beta(\lambda)$ is a nice subset of $\mathcal{A}_t$. Thus 
\begin{align*}|\lambda|&=\sum_{i=1}^kx_i-\binom{k}{2}\leq \sum_{i=1}^k(t+1-2i)-\binom{k}{2}=-\frac32(k-\frac{2t+1}{6})^2+\frac{(2t+1)^2}{24}.
\end{align*}

When $t=3n$ for some integer $n$, we obtain $$|\lambda|\leq -\frac32(k-\frac{6n+1}{6})^2+\frac{(6n+1)^2}{24}\leq \frac{3n^2}2+\frac{n}{2}.$$

When $t=3n+1$ for some integer $n$, we obtain $$|\lambda|\leq -\frac32(k-\frac{6n+3}{6})^2+\frac{(6n+3)^2}{24}\leq \frac{3n^2}2+\frac{3n}{2}.$$

When $t=3n+2$ for some integer $n$, we obtain $$|\lambda|\leq -\frac32(k-\frac{6n+5}{6})^2+\frac{(6n+5)^2}{24}\leq \frac{3n^2}2+\frac{5n}{2}+1.$$

Finally, in each case we always obtain $$\lambda\leq [\frac13\binom{t+1}2].$$

(3)  By (2) we know, if $\lambda$ is  a $(t, t + 1)$-core partition with distinct parts which has the largest size, then its $\beta$-set must be  $\beta(\lambda)=\{t-1,t-3,\ldots,t-(2k-1)\}$  for some integer $k$.  When $t=3n$ for some integer $n$, $\lambda$ has the largest size $[\frac13\binom{t+1}2]$ if and only if  $k=n$; when $t=3n+1$ for some integer $n$, $\lambda$ has the largest size $[\frac13\binom{t+1}2]$ if and only if  $k=n$ or $n+1$; when $t=3n+2$ for some integer $n$, $\lambda$ has the largest size $[\frac13\binom{t+1}2]$ if and only if  $k=n+1$.
Therefore we prove the claim.

(4)  First we introduce some sequences. For every $t\geq 2$, let $$b_t=\sum_{B\in \mathcal{B}_t}|B|, \ \  c_t=\sum_{B\in \mathcal{B}_t}|B|^2,$$ $$d_t=\sum_{B\in \mathcal{B}_t}\sum_{x\in B}x, \ \ e_t=d_t-\sum_{B\in \mathcal{B}_t}\binom{|B|}2,$$ and   $$\phi_t=\sum\limits_{\substack{i+j=t\\ i,j\geq 1}}F_iF_j,  \ \ \psi_t=\sum\limits_{\substack{i+j+k=t\\ i,j,k\geq 1}}F_iF_jF_k.$$

Then by Lemmas \ref{betaset}, \ref{distinct} and \ref{setdis} we obtain 
$e_t$ equals the total sum of the sizes of all $(t, t + 1)$-core partitions with distinct parts. Thus we just need to show that $e_t=\psi_{t+1}$ for $t\geq 2$.

When $t\geq 4$, suppose that $B\in \mathcal{B}_t$.  If  $t-1\in B$, then  $t-2\notin B$. Therefore 
\begin{align*}b_t&=\sum\limits_{\substack{B\in \mathcal{B}_t\\ t-1\notin B}}|B|+\sum\limits_{\substack{B\in \mathcal{B}_t\\ t-1\in B}}|B|=\sum\limits_{\substack{B\in \mathcal{B}_{t-1}}}|B|+\sum\limits_{\substack{B\in \mathcal{B}_{t-2}}}(|B|+1)\\&=b_{t-1}+b_{t-2}+a_{t-2}=b_{t-1}+b_{t-2}+F_{t-1}.\end{align*}

Similarly we have 
\begin{align*}c_t&=\sum\limits_{\substack{B\in \mathcal{B}_t\\ t-1\notin B}}|B|^2+\sum\limits_{\substack{B\in \mathcal{B}_t\\ t-1\in B}}|B|^2=\sum\limits_{\substack{B\in \mathcal{B}_{t-1}}}|B|^2+\sum\limits_{\substack{B\in \mathcal{B}_{t-2}}}(|B|+1)^2\\&=c_{t-1}+c_{t-2}+2b_{t-2}+F_{t-1}
\end{align*}
and 
\begin{align*}d_t&=\sum\limits_{\substack{B\in \mathcal{B}_t\\ t-1\notin B}}\sum_{x\in B}x+\sum\limits_{\substack{B\in \mathcal{B}_t\\ t-1\in B}}\sum_{x\in B}x=\sum\limits_{\substack{B\in \mathcal{B}_{t-1}}}\sum_{x\in B}x+\sum\limits_{\substack{B\in \mathcal{B}_{t-2}}}(t-1+\sum_{x\in B}x)\\&=d_{t-1}+d_{t-2}+(t-1)F_{t-1}.
\end{align*}

Notice that 
$$e_t=d_t-\sum_{B\in \mathcal{B}_t}\binom{|B|}2=d_t-\frac12(c_t-b_t),$$
which means that 

$$e_t-e_{t-1}-e_{t-2}=(t-1)F_{t-1}-b_{t-2}.$$

Since  $e_2=\psi_3=1,$ $e_3=\psi_4=3,$ to show that $e_t=\psi_{t+1}$ for $t\geq 2$, we just need to show that
\begin{align}\label{eq:epsi}
\psi_{t+1}-\psi_{t}-\psi_{t-1}=(t-1)F_{t-1}-b_{t-2}
\end{align} for $t\geq 4.$

Notice that $F_0=0,\ F_1=1.$  We have 
\begin{align*}\psi_{t+1}&=\sum\limits_{\substack{i+j+k=t+1\\ i,j,k\geq 1}}F_iF_jF_k=\sum\limits_{\substack{i+j+k=t+1\\ j,k\geq 1\\i\geq 2}}F_iF_jF_k+\sum\limits_{\substack{j+k=t\\ j,k\geq 1}}F_jF_k\\&=\sum\limits_{\substack{i+j+k=t+1\\ j,k\geq 1\\i\geq 2}}F_{i-1}F_jF_k+\sum\limits_{\substack{i+j+k=t+1\\ j,k\geq 1\\i\geq 2}}F_{i-2}F_jF_k+\phi_{t}\\&=\sum\limits_{\substack{i'+j+k=t\\ i',j,k\geq 1}}F_{i'}F_jF_k+\sum\limits_{\substack{i'+j+k=t-1\\ i',j,k\geq 1}}F_{i'}F_jF_k+\phi_{t}\\&=
\psi_{t}+\psi_{t-1}+\phi_{t}.\end{align*}

Thus (\ref{eq:epsi}) is equivalent to 
\begin{align}\label{eq:ephi}(t-1)F_{t-1}-b_{t-2}=\phi_{t}.\end{align}

Notice that  (\ref{eq:ephi}) is true for $t=4,5$. Also we have 
\begin{align*}\ & \ \ \ \    (t-1)F_{t-1}-b_{t-2}-\bigl((t-2)F_{t-2}-b_{t-3}\bigr)-\bigl((t-3)F_{t-3}-b_{t-4}\bigr)\\&=  (t-1)F_{t-1}-(t-2)F_{t-2}-(t-3)F_{t-3}-F_{t-3}\\&=  (t-1)F_{t-1}-(t-2)F_{t-1}=F_{t-1}\end{align*}

and \begin{align*}\phi_{t}&=\sum\limits_{\substack{i+j=t\\ i,j\geq 1}}F_iF_j=\sum\limits_{\substack{i+j=t\\ j\geq 1\\i\geq 2}}F_iF_j + F_{t-1} =\sum\limits_{\substack{i+j=t\\ j\geq 1\\i\geq 2}}F_{i-1}F_j+\sum\limits_{\substack{i+j=t\\ j\geq 1\\i\geq 2}}F_{i-2}F_j + F_{t-1} \\&=\sum\limits_{\substack{i'+j=t-1\\ i',j\geq 1}}F_{i'}F_j+\sum\limits_{\substack{i'+j=t-2\\ i',j\geq 1}}F_{i'}F_j+F_{t-1}=
\phi_{t-1}+\phi_{t-2}+F_{t-1}.\end{align*}

Now we obtain (\ref{eq:ephi}) is true for $t\geq 4$. This implies that $e_t=\psi_{t+1}$ for $t\geq 2$. Therefore 
the total sum of the sizes of all $(t, t + 1)$-core partitions with distinct parts is
$$e_t=\psi_{t+1}=\sum\limits_{\substack{i+j+k=t+1\\ i,j,k\geq 1}}F_iF_jF_k.$$
Then by (1) the average size of these partitions is $$\sum\limits_{\substack{i+j+k=t+1\\ i,j,k\geq 1}}\frac{F_iF_jF_k}{F_{t+1}}.$$
\end{proof}

\section{Acknowledgements}
The author is supported  by Grant [PP00P2\_138906] of the Swiss National Science Foundation and the Post-doctoral Fellowship from LABEX of the University of Strasbourg. The author would like to thank Amdeberhan for introducing his conjecture and Straub for pointing out his results on the enumeration of $(t, nt-1)$-core partitions with distinct parts.


\begin{thebibliography}{1}



\bibitem{tamd}
T. Amdeberhan, Theorems, problems and conjectures, preprint; {\tt  arXiv:1207.4045v6}.
 
 \bibitem{tamd1}
T. Amdeberhan and E. Leven, Multi-cores, posets, and lattice paths,  Adv. in Appl. Math. $71 (2015)$, $1-13$.

\bibitem{and}
J. Anderson, Partitions which are simultaneously $t_1$- and
$t_2$-core, Disc. Math. $248(2002),\ 237-243$.

\bibitem{AHJ}
D. Armstrong, C.R.H. Hanusa, and B. Jones, Results and conjectures on simultaneous core partitions,
European J. Combin. $41 (2014),\ 205-220$.

\bibitem{berge} C. Berge, \emph{Principles of Combinatorics}, Mathematics in Science and Engineering Vol. $72$, Academic
Press, New York, $1971$.


\bibitem{CHW}
W. Chen, H. Huang and L. Wang, Average size of a self-conjugate $s,t$-core partition,  Proc. Amer. Math. Soc. $144(4) (2016),\ 1391-1399$.


\bibitem{stanton2}
F. Garvan, D. Kim and D. Stanton, Cranks and $t$-cores, Inv. Math.
$101 (1990),\ 1-17.$




\bibitem{PJ}
P. Johnson, Lattice points and simultaneous core partitions, preprint; {\tt arXiv:1502.07934v2}.

\bibitem{Macdonald} I. G. Macdonald, \emph{Symmetric Functions and Hall Polynomials}, Oxford Mathematical Monographs, The Clarendon Press, Oxford University Press, New York, second edition, 1995. 

\bibitem{NS}
R. Nath and J. A. Sellers, A combinatorial proof of a relationship between maximal $(2k-1,2k+1)$ and $(2k-1,2k,2k+1)$-cores, Electron. J. Combin. $23(1) (2016)$, Paper $1.13$.

\bibitem{NS2}
R. Nath and J. A. Sellers,
Abaci structures of $(s, ms\pm1) $-core partitions,  Electron. J. Combin. $24(1) (2016)$, Paper $1.5$.

\bibitem{N3}
R. Nath, Symmetry in maximal $(s-1,s+1)$ cores. Integers $16 (2016)$, Paper No. $A18$.

\bibitem{ols}
J. Olsson and D. Stanton, Block inclusions and cores of partitions,
Aequationes Math. $74(1-2)(2007),\ 90-110$.

\bibitem{ec2} R. P. Stanley, \emph{Enumerative Combinatorics}, vol.~2, Cambridge University Press, New York/Cam\-bridge, 1999.

\bibitem{SZ}
R. P. Stanley and F. Zanello, The Catalan case of Armstrong's conjectures on simultaneous core partitions, SIAM J. Discrete Math., $29(1)(2015),\ 658-666$. 

\bibitem{stanton}
D. Stanton, Open positivity conjectures for integer partitions,
Trends Math. $2 (1999),\ 19 - 25$.

\bibitem{Straub}
A. Straub, Core partitions into distinct parts and an analog of Euler's theorem, European J. Combin. $57 (2016),\ 40-49$.

\bibitem{Wang}
V. Wang, Simultaneous core partitions: parameterizations and sums,  Electron. J. Combin. $23(1) (2016)$, Paper $1.4$.




\bibitem{Xiong1}
H. Xiong, On the largest size of $(t,t+1,..., t+p)$-core partitions, Disc. Math. $339(1) (2016)$,  $308-317$.


\bibitem{YQJZ}
S. Yan, G. Qin, Z. Jin and R. Zhou, On $(2k+ 1, 2k+ 3) $-core partitions with distinct parts, preprint; {\tt arXiv:1604.03729}.

\bibitem{YZZ}
J. Yang, M. Zhong and R. Zhou, On the enumeration of $(s,s+1,s+2)$-core partitions, European J. Combin. $49 (2015),\ 203-217$.

\bibitem{ZZ}
A. Zaleski and D. Zeilberger, Explicit (polynomial!) expressions for the expectation, variance and higher moments of the size of a $(2n+ 1, 2n+ 3)$-core partition with distinct parts, preprint; {\tt arXiv:1611.05775}.

\bibitem{Za}
A. Zaleski,
Explicit expressions for the moments of the size of an $(s,s+1)$-core partition with distinct parts,
Adv. in Appl. Math. $84 (2017),\ 1-7$. 
\end{thebibliography}
\end{document}